\documentclass[twoside, 12pt]{article}
\usepackage{graphicx}
\usepackage[russian, english]{babel}
\usepackage{epsfig}
\usepackage{amssymb,amsmath,amsfonts,soul,amsthm,enumerate}
\usepackage{amsfonts}
\usepackage{amsmath}
\usepackage{graphicx}
\usepackage{amsthm}
\usepackage[cp1251]{inputenc}
\usepackage[T2A]{fontenc}
\usepackage{mathrsfs}
\usepackage[english]{babel}
\newtheorem{theorem}{Theorem}[section]
\newtheorem{lemma}{Lemma}[section]

\newtheorem{remark}{Remark}

\numberwithin{equation}{section}
 \def\@evenhead{\vbox{\hbox to \textwidth{\thepage\hfil\sl\leftmark\strut}\hrule}}
 \def\@oddhead{\vbox{\hbox to \textwidth{\rightmark\hfill\thepage\strut}\hrule}}

\begin{document}
 \sloppy
 
 \centerline{\bf ON THE WEIGHTED HARDY TYPE INEQUALITY} \centerline{\bf FOR
FUNCTIONS FROM $W^1_p$} \centerline{\bf VANISHING ON SMALL PARTS OF THE BOUNDARY}    % Title of talk

\vskip 0.3cm

\centerline{\bf Yu.O. Koroleva}        % Authors from the same institution

\markboth{\hfill{\footnotesize\rm   Yu.O. Koroleva }\hfill}
{\hfill{\footnotesize\sl  On the Weighted Hardy Type Inequality  for
Functions from $W^1_p$ Vanishing on Small Parts of the Boundary}\hfill}
\vskip 0.3cm

\vskip 0.7 cm

\noindent {\bf Key words:}  Integral Inequalities, Partial
Differential Equations, Functional Analysis, Spectral Theory,
Homogenization Theory, Hardy-type Inequalities.

\vskip 0.2cm

\noindent {\bf AMS Mathematics Subject Classification:} 39A10, 39A11, 39A70,
39B62, 41A44, 45A05.

\vskip 0.2cm

\noindent {\bf Abstract.} A new  weighted
Hardy-type inequality for functions from
the Sobolev space $W_{p}^{1}$ is proved. It is assumed that functions vanish on small alternating pieces of the
boundary. The proved inequality generalizes the classical known weighted Hardy-type inequalities.

\section{\large Introduction}

Mathematical analysis needs to treat integral inequalities of different types. One of the useful estimate is the classical Hardy-type inequality:
\begin{equation}\label{hardy-1}
\int_{\Omega}\left(|u(x)|^\frac{np}{n-p}\right)^p\,dx\leq C(p,n)\left(\int_{\Omega}{|\nabla u(x)|}^p\,dx\right)^{\frac{1}{p}},
\end{equation}
where $u\in C_{0}^{\infty}(\Omega), \ p>1, n\in \mathbb{Z}_{+}, \ \Omega\in \mathbf{R}^n.$

This estimate could be generalized to the multidimensional
weighted form:
\begin{equation}\label{hardy-general}
\left(\int\limits_{\mathbb{R}^n}V(x)|u(x)|^q\,dx\right)^{\frac{1}{q}}\leq
{C}\left(\int\limits_{\mathbb{R}^n}W(x)|\nabla
u(x)|^p\,dx\right)^{\frac{1}{p}},
\end{equation}

 where $\ u(x)\in
C_{0}^{\infty}(\mathbb{R}^n),$ $V(x)\geq 0, W(x)\geq 0,\,p,q\geq 1,$
and the constant ${C}$ depends only on weight functions $V(x)\mbox{ and
}W(x).$ There are several results concerning weighted Hardy-type
inequality (see e.g. the books \cite{KMP}, \cite{KP} and \cite{OK}
and the references given there). The following result is by  V.\,Maz'ya (see
\cite[Corollary of Theorem 1.4.1.2, Theorem 1.4.2.2 ]{mazja}):

\begin{theorem}
Let $1<p<q<\infty,\ p<n $ or $1=p\leq q<\infty.$ Then the Hardy
inequality (\ref{hardy-general}) with $W(x)\equiv 1$ holds for every
$u\in C_{0}^{\infty}(\mathbb{R}^n)$ with a finite constant $C>0$ if
and only if
$$B:=\sup\limits_{x\in \mathbb{R}^n} \sup\limits_{R>0}R^{1-\frac{n}{p}}\left(\int\limits_{B_{R(x)}}V(y)\,dy\right)^\frac{1}{q}<\infty,$$
where $B_{R(x)}$ is a ball of the radius $R$ centered at the point
$x.$
\end{theorem}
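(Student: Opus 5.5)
Necessity is a test-function argument; sufficiency goes through Maz'ya's capacitary characterization, with a Hedberg-type strong capacitary inequality when $p>1$ and the coarea formula plus the isoperimetric inequality when $p=1$.

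\textbf{Necessity.} Fix $x\in\mathbb{R}^n$ and $R>0$. Take a cut-off $u\in C_0^\infty(B_{2R}(x))$ with $u\equiv1$ on $B_R(x)$ and $|\nabla u|\le 2/R$. Inserting it into (\ref{hardy-general}) with $W\equiv1$ gives
$$\left(\int_{B_R(x)}V\,dy\right)^{1/q}\le C\,(2/R)\,|B_{2R}|^{1/p}=c(n,p)\,C\,R^{\frac np-1}.$$
Multiplying by $R^{1-n/p}$ and taking the supremum over $x$ and $R$ yields $B\le c\,C<\infty$.

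\textbf{Sufficiency, reduction to capacity.} By Maz'ya's capacitary criterion, inequality (\ref{hardy-general}) holds for $q\ge p$ if and only if
$$\mu(F)^{p/q}\le K\,\mathrm{cap}_p(F)\quad\text{for all compact } F,\qquad d\mu=V\,dx.$$
So it suffices to prove this from the ball condition. For the direction we need, I would argue directly:
\begin{itemize}
\item split $u$ into level sets $N_t=\{|u|\ge t\}$;
\item write $\int V|u|^q\le \sum_j 2^{(j+1)q}\mu(N_{2^j})$;
\item use $\mu(N_{2^j})\le K^{q/p}\mathrm{cap}_p(N_{2^j})^{q/p}$;
\item since $q/p\ge1$, apply $\sum a_j^{q/p}\le(\sum a_j)^{q/p}$;
\item invoke the strong capacitary inequality $\sum_j 2^{jp}\mathrm{cap}_p(N_{2^j})\le c\int|\nabla u|^p$.
\end{itemize}
For $p=1$ this is simpler: by the coarea formula and the isoperimetric inequality, the criterion becomes $\mu(F)^{1/q}\le K\,\mathrm{Per}(F)$.

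\textbf{Main step: ball condition implies capacity condition.} This is where the hypothesis $p<q$ matters. For $p=1$, cover $F$ by a Besicovitch/Vitali-type family of balls $B_{r_i}$ adapted to $F$, for instance balls where the relative density of $F$ is about $1/2$. For these balls the relative isoperimetric inequality gives $\sum_i r_i^{n-1}\le c\,\mathrm{Per}(F)$. Then
$$\mu(F)^{1/q}\le\Big(\sum_i\mu(B_{r_i})\Big)^{1/q}\le B\Big(\sum_i r_i^{(n-1)q}\Big)^{1/q}\le B\sum_i r_i^{n-1},$$
where the last step uses $q\ge1$.

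For $1<p<n$ and $p<q$, the analogous covering argument on the capacity side is exactly where I expect the main difficulty. A capacity-minimizing covering of a general compact set by balls is not available in the same way. Instead I would use Adams' trace inequality approach:
\begin{itemize}
\item write $|u|\le c\,I_1|\nabla u|$, with $I_1$ the Riesz potential;
\item show that $I_1:L^p\to L^q(\mu)$ is bounded when $\mu(B_R)\le B^qR^{(n/p-1)q}$ and $q>p$.
\end{itemize}
The boundedness would follow from a Hedberg-type splitting of $I_1f(x)$ into near and far parts, controlled by the maximal function. One then needs a Marcinkiewicz interpolation using the growth exponent $s=(n/p-1)q>n-p$. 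The strict inequality $q>p$ supplies the room for the interpolation, whereas $q=p$ would require a genuinely capacitary condition instead of the ball condition.
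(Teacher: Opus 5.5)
The paper does not prove this statement; it quotes it from Maz'ya's book. Your proposal therefore has to stand on its own.

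\textbf{What is correct.} Your necessity argument is correct. So is your $p=1$ sufficiency: the coarea formula, density-$\tfrac12$ balls with the relative isoperimetric inequality, Vitali enlargement, and $(\sum a_i^q)^{1/q}\le\sum a_i$. Your level-set proof that $\mu(F)^{p/q}\le K\,\mathrm{cap}_p(F)$ implies the inequality is also correct.

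\textbf{The gap: the case $1<p<n$.} This is the only place where the ball condition has to do real work, and your sketch of Adams' trace inequality does not make it work. Suppose the near part of the Hedberg splitting is controlled by the Hardy--Littlewood maximal function. Optimizing $\delta$ gives $I_1f\le c\,(Mf)^{1-p/n}\|f\|_p^{p/n}$, so you would need $\int (Mf)^t\,d\mu\le c\,\|f\|_p^t$ with $t=q(n-p)/n=ps/n$.

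For $d\mu=V\,dx$, Lebesgue differentiation shows that $B<\infty$ forces $V=0$ a.e.\ when $q>p^*=np/(n-p)$. When $q=p^*$ it gives $V\in L^\infty$, and the claim is plain Sobolev. So the entire content of the theorem is $p<q<p^*$, i.e.\ $n-p<s<n$, where $t<p$.

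In that range the required bound is false. The weight $V=|x_n|^{-\gamma}$ with $0<\gamma<1$ satisfies the ball condition with $s=n-\gamma$. Take $f$ to be a sum of $N$ far-apart translates of one bump in the $x_1$-direction. Then the left side grows like $N$ while the right side grows like $N^{t/p}$. The weak-type variant, choosing $\delta=\delta(\lambda)$ so the far part is $\le\lambda/2$, needs $\mu\{Mf>\tau\}\le c\,\tau^{-t}\|f\|_p^t$. It fails in the same way, so Marcinkiewicz interpolation has nothing to interpolate.

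\textbf{Repair via a fractional maximal function.} What is missing is $M_\beta f(x)=\sup_{r}r^{\beta-n}\int_{B(x,r)}|f|$ with $\beta<1$.

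Summing over dyadic shells bounds the near part by $c\,\delta^{1-\beta}M_\beta f(x)$. Hence $I_1 f\le c\,(M_\beta f)^{1-\theta}\|f\|_p^{\theta}$ with $\theta=\frac{1-\beta}{n/p-\beta}$. You then need $M_\beta:L^p\to L^t(\mu)$ with $t=q(1-\theta)$, and note that $t(n/p-\beta)=s$.

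Take a Vitali cover of $\{M_\beta f>\lambda\}$ by disjoint balls $B_i$ with $r_i^{\beta-n}\int_{B_i}|f|>\lambda$. H\"older and the ball condition give $\mu(5B_i)\le c\,\lambda^{-t}\bigl(\int_{B_i}|f|^p\bigr)^{t/p}$. This sums to a weak-type bound because $t\ge p$.

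Off-diagonal Marcinkiewicz interpolation along the line $t(n/p-\beta)=s$ then gives strong type provided $t>p$, i.e.\ $\beta>(n-s)/p$. This is compatible with $\beta<1$ exactly because $q>p$.

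\textbf{Alternative repair via capacity.} Your abandoned capacitary route also closes, and it needs no capacity-adapted covering.
First, any ball cover of $F$ gives $\mu(F)\le B^q H^s_\infty(F)$.
Second, for $s>n-p$ you get $H^s_\infty(F)\le c\,\mathrm{cap}_p(F)^{s/(n-p)}$ by combining:
the bound $1\le c\,I_1|\nabla u|$ on $F$;
a dyadic pigeonhole with room $\epsilon=(s-n+p)/p>0$;
H\"older;
and Vitali.
Since $(n-p)/s=p/q$, this is exactly $\mu(F)^{p/q}\le c\,B^p\,\mathrm{cap}_p(F)$.
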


J.\, Ne\v{c}as has generalized the weighted Hardy-type inequality to domains in
$\mathbb{R}^n$ in \cite{necas}.
He proved that if $\Omega$ is a bounded domain with Lipschitz
boundary, $1<p<\infty,\,$ $\alpha<p-1,$ then for $u\in
C^{\infty}_{0}(\Omega)$ the inequality
\begin{equation}\label{dist}
\int\limits_{\Omega}|u(x)|^{p}\varrho^{-p+\alpha}(x)\,dx \leq C
\int\limits_{\Omega}|\nabla u(x)|^{p}\varrho^{\alpha}(x)\,dx
\end{equation}
holds, where $\varrho(x)=dist(x, \partial \Omega).$ After that this
inequality was generalized by A.\,Kufner in \cite{Kufner} to domains
with the H\"older boundary and later by A.\,Wannebo (see
\cite{wannebo}) to domains with the generalized H\"older condition.
All results related to (\ref{dist}) in the case $\alpha=0$ was
described in \cite{KinnKorte}.

The aim of this paper is to prove a Hardy-type inequality
(\ref{dist}) for an arbitrary  $p>1$ for functions from $H^1,$ vanishing on small
alternating pieces of the boundary of the domain. It is assumed for
the simplicity  that $\Omega$ is a rectangle in $\mathbb{R}^2.$

Such a result is completely new in the theory of Hardy-type
inequalities and it gives us possibility to prove the boundedness of the sequence of functions considered in
microinhomogeneous domains. Indeed, in the classical statement the Hardy inequality was proved only for functions vanishing on the whole boundary of the domain. Our research shows that this requirement is too much restrictive. The Hardy inequality holds true even if the function vanishes on the set of small measure.

The paper \cite{Kormia} treats the case of weighted Hardy inequality for $p=2$ for functions vanishing on small pieces of the boundary.
A related result with weight functions equaled to 1 were  obtained earlier in \cite{ckp} and in
\cite{ckmp} for Friedrichs inequality. This inequality can be regarded as a
special case of weighted Hardy-type inequalities.

The current research is organized as follows: in Section \ref{s1} we give some
necessary definitions and formulate auxiliary lemmas. The main
results are presented and proved in Section \ref{s2} and Section
\ref{s3} is reserved for some concluding remarks.

 % Example of Definition
 
 \section{Preliminaries.}\label{s1}
Let $\Omega\subset\mathbb{R}^2$ be the rectangle $[0,a]\times[0,b]:$
$$\Omega=\{(x_1,x_2): 0\leq x_1\leq a, 0\leq x_2\leq b\}.$$
Assume that $\varepsilon>0$ is a small positive parameter,
$\varepsilon=\frac{b}{N},\ N\gg 1,\ N\in\mathbb{N},$ and
$\delta=const,$ $0<\delta<1.$ Moreover, let
$${{\Gamma}}:=\{(x_1,x_2)\in\partial\Omega:\  x_1=0\}.$$
We suppose that ${\Gamma}$ is represented in the form:
$${\Gamma}=\overline{{\Gamma}_{\varepsilon}}
\cup{\gamma}_{\varepsilon},\,\,\, {\Gamma}_{\varepsilon}=
\bigcup\limits_{{i}}({\Gamma}^{\varepsilon}_{i}),\,\,\,
{\gamma}_{\varepsilon}=\bigcup\limits_{{i}}({\gamma}^{\varepsilon}_{i}),
\,\,\, {\Gamma}^{\varepsilon}_{i}\cap{\gamma}^{\varepsilon}_{i}
=\varnothing,
$$
$$
|{\Gamma}^{\varepsilon}_{i}| =\varepsilon\delta,\quad
|\left({\Gamma}^{\varepsilon}_{i}
\cup{\gamma}^{\varepsilon}_{i}\right)|=\varepsilon,
$$
where  ${\Gamma}^{\varepsilon}_{i}$ and ${\gamma}^{\varepsilon}_{i}$
are alternating (see Figure \ref{f1}).

\begin{figure}[htb]
\begin{center}
\includegraphics[width=12cm]{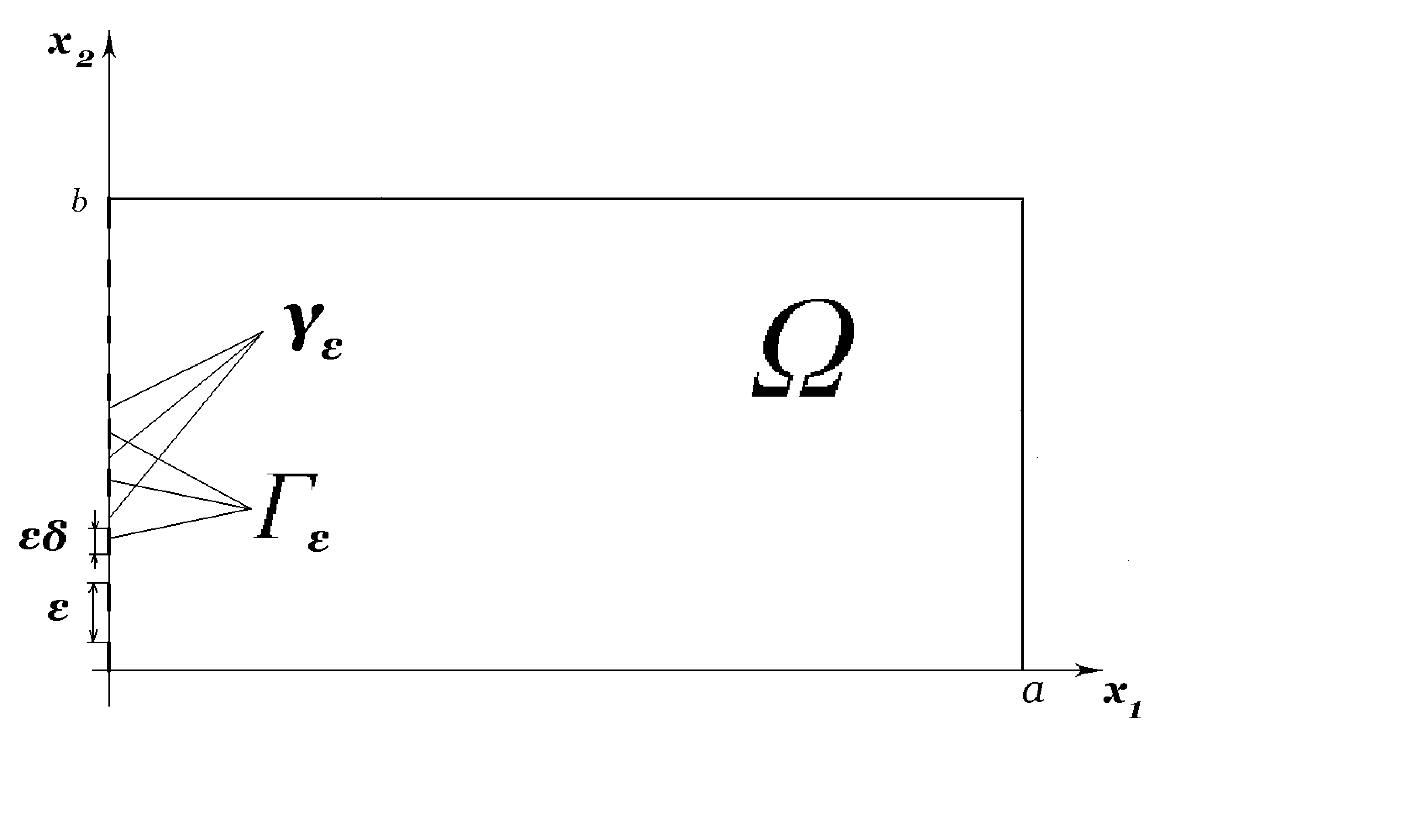}
\end{center}
\hspace{-2cm}
\caption{ The domain $\Omega$. }\label{f1}
\end{figure}

Denote by
$$\Pi_1^i:=\{(x_1,x_2)\in \Omega : 0\leq x_1\leq a,\, x_2\in{\Gamma}^{\varepsilon}_{i}\},$$
$$\Pi_2^i:=\{(x_1,x_2)\in \Omega : 0\leq x_1\leq a,\, x_2\in{\gamma}^{\varepsilon}_{i}\},$$
$$\Pi_1^i\cap\Pi_2^i=l_i; \Pi_1=\bigcup\limits_{i}\Pi_1^i,\ \Pi_2=\bigcup\limits_{i}\Pi_2^i.$$
%Fix a parameter $\theta>0.$ Define the set
%$\Omega^{\theta}:=\{x=(x_1, x_2)\in\Omega:\ x_1>\theta\}.$ The sets
%$\Pi_1^{i,\theta}, \Pi_2^{i,\theta}, \Pi_1^\theta \mbox{ and
%}\Pi_2^\theta$ are defined analogously.

Moreover, we use the
notation
$$B(x,r):=\{(y_1,y_2)\in \mathbb{R}^2: (y_1-x_1)^2+(y_2-x_2)^2\leq
r^2\},$$ and the average value of the function $u$ over $B(\cdot,
r)\in \mathbb{R}^2$ is defined as
$$u_B:=\frac{1}{\pi r^2}\int\limits_{B}u(x)\,dx.$$
Let $u$ be a locally integrable function on $\mathbb{R}^2.$ The
maximal functions $M(u)$ and $M_{R}(u)$ of $u$ are defined by
\begin{equation}\label{Mu}
M(u)(x):=\sup\limits_{r>0}u_B,\ M_R(u)(x):=\sup\limits_{0<r\leq R}u_B.
\end{equation}
Define the Sobolev space
$$W_{p}^{1}(\Omega, \Gamma_\varepsilon)=\{u_\varepsilon\in W_{p}^{1}(\Omega):\
u_\varepsilon|_{\Gamma_\varepsilon}=0 \}.$$ Analogously,
$$C^{\infty}(\Omega, \Gamma_\varepsilon)=\{u_\varepsilon\in C^{\infty}(\Omega):
u_\varepsilon=0 \mbox{ in a neighborhood of } {\Gamma_\varepsilon}=0
\}.$$ Let $x\in \Omega,\ \rho(x)={\rm dist}(x,\Gamma).$  We use the
following functions:
$$r_1(x)=\rho(x),\
r_2(x)={\rm
dist}(x,\,\Gamma_\varepsilon):=\inf\limits_{y\in\Gamma_\varepsilon}{\rm
dist}(x,y).
$$
According to the geometrical construction of the domain, (see Figure
\ref{hardy5.11}), $$r_1(x)<
r_2(x)<\rho(x)+(1-\delta)\frac{\varepsilon}{2}.$$
%\sqrt{\rho^2(x)+\frac{(\varepsilon(1-\delta))^2}{4}}

\begin{figure}[htb]
\begin{center}
\includegraphics[width=12cm]{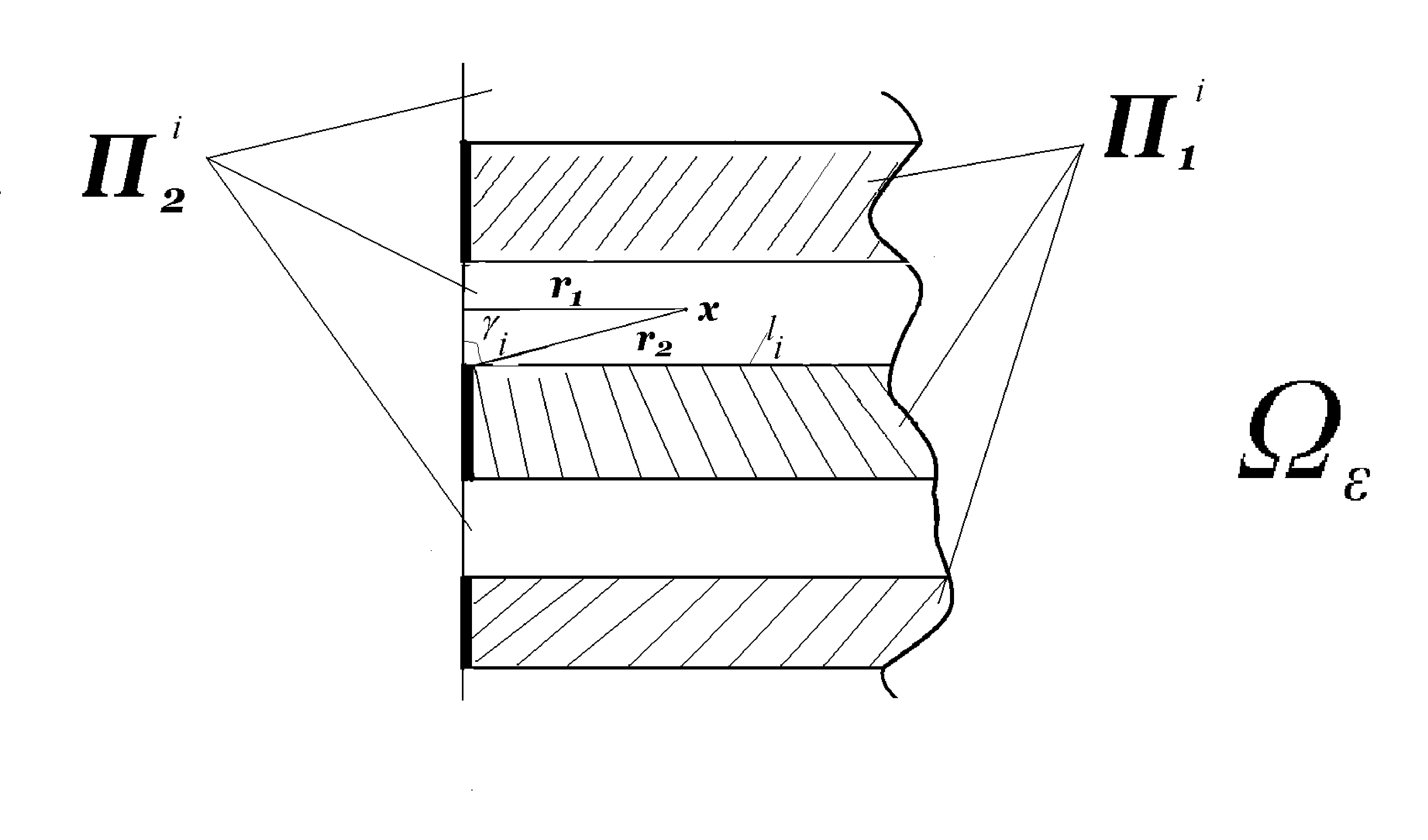}
\end{center}
\caption{ The domain $\Omega$. }\label{hardy5.11}
\end{figure}

We prove first an auxiliary Lemma on the validity of a Friedrichs inequality in the considered domain.
\begin{lemma}
Let $u_\varepsilon\in W_{p}^{1}(\Omega, \Gamma_\varepsilon).$ Then the
Friedrichs type inequality
\begin{equation}\label{Fr}
\int\limits_{\Pi_2}|u_\varepsilon|^p\,dx\leq
K(a,\varepsilon,\delta)\int\limits_{\Omega}|\nabla
u_\varepsilon|^p\,dx,
\end{equation}
holds with
$K(a,\varepsilon,\delta)=2^p(p+1)\left(a^{\frac{p}{q}+1}\frac{1-\delta}{\delta}+\varepsilon^{\frac{p}{q}+1}(1-\delta)^{\frac{p}{q}+1}\right).$
\end{lemma}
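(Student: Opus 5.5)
By density it suffices to prove the inequality for $u_\varepsilon\in C^{\infty}(\Omega,\Gamma_\varepsilon)$. Throughout, $q$ denotes the conjugate exponent, $\frac1p+\frac1q=1$, so that $\frac pq=p-1$. The plan is to reduce everything to one-dimensional estimates along segments. Fix a strip $\Pi_2^i$ and a point $x=(x_1,x_2)\in\Pi_2^i$. The distance in the $x_2$ direction from $x$ to the nearest vanishing piece $\Gamma^\varepsilon_j$ is at most $(1-\delta)\varepsilon$. The idea is to connect $x$ to a point of the neighbouring strip $\Pi_1^j$, where $u_\varepsilon$ is controlled via the boundary condition on $\Gamma^\varepsilon_j$.

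\textbf{Step 1 (estimate on $\Pi_1$).} For $y=(y_1,y_2)\in\Pi_1^j$ we have $u_\varepsilon(0,y_2)=0$, so
$$u_\varepsilon(y_1,y_2)=\int_0^{y_1}\partial_1u_\varepsilon(t,y_2)\,dt .$$
H\"older's inequality gives $|u_\varepsilon(y)|^p\le y_1^{p/q}\int_0^a|\partial_1u_\varepsilon(t,y_2)|^p\,dt$. Integrating over $\Pi_1^j$ yields
$$\int_{\Pi_1^j}|u_\varepsilon|^p\,dx\le \frac{a^{p/q+1}}{p/q+1}\int_{\Pi_1^j}|\partial_1 u_\varepsilon|^p\,dx .$$
Note that $p/q+1=p$, which is where the powers $a^{p/q+1}$ and the factor $(p+1)$ (a crude bound) in $K$ originate.

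\textbf{Step 2 (transfer to $\Pi_2$).} For $x\in\Pi_2^i$, choose the adjacent piece $\Gamma^\varepsilon_i$ (or $\Gamma^\varepsilon_{i+1}$) and a point $y_2$ in it. Write
$$u_\varepsilon(x_1,x_2)=u_\varepsilon(x_1,y_2)+\int_{y_2}^{x_2}\partial_2u_\varepsilon(x_1,s)\,ds .$$
Using $|A+B|^p\le 2^{p-1}(|A|^p+|B|^p)$ and H\"older's inequality, the second term is bounded by
$$\bigl((1-\delta)\varepsilon\bigr)^{p/q}\int_{\text{segment}}|\partial_2u_\varepsilon|^p\,ds .$$
Then integrate in $x_2$ over $\gamma_i^\varepsilon$, which has length $(1-\delta)\varepsilon$, giving the factor $\bigl((1-\delta)\varepsilon\bigr)^{p/q+1}$. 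Average in $y_2$ over $\Gamma^\varepsilon_i$, which has length $\delta\varepsilon$: the first term contributes
$$\frac{(1-\delta)\varepsilon}{\delta\varepsilon}\int_{\Pi_1^i}|u_\varepsilon(x_1,y_2)|^p\,dy_2,$$
and this produces the ratio $\frac{1-\delta}{\delta}$. Integrating in $x_1$ and inserting Step 1 gives
$$\int_{\Pi_2^i}|u_\varepsilon|^p\le 2^{p-1}\Bigl(\tfrac{1-\delta}{\delta}\,C a^{p/q+1}\int_{\Pi_1^i}|\nabla u_\varepsilon|^p+C\bigl(\varepsilon(1-\delta)\bigr)^{p/q+1}\int_{\Pi_1^i\cup\Pi_2^i}|\nabla u_\varepsilon|^p\Bigr).$$

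\textbf{Step 3 (summation).} Sum over $i$. Each cell $\Pi_1^i\cup\Pi_2^i$ is used a bounded number of times (at most twice if one chooses the nearer neighbour), which gives the extra factor $2$ and hence $2^p$. The resulting constant is dominated by $K(a,\varepsilon,\delta)$.

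The main obstacle is bookkeeping in Step 2. The averaging over $y_2$ must be done carefully, and the segment from $y_2$ to $x_2$ must stay inside $\Omega$; at the top and bottom edges one uses the single available neighbour. This is needed to get exactly the stated powers and the factor $\frac{1-\delta}{\delta}$ rather than $\frac{1}{\delta}$. If a direct average is awkward, I would instead take the segment from $x_2$ to the symmetric point $y_2=\phi(x_2)$ under an affine map $\gamma_i^\varepsilon\to\Gamma_i^\varepsilon$, whose Jacobian is $\frac{\delta}{1-\delta}$.
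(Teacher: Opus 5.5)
Your proposal is correct and follows essentially the paper's route. Both use the Newton--Leibniz/H\"older estimate in $x_1$ on $\Pi_1^i$ starting from the vanishing piece $\Gamma_i^\varepsilon$, then transfer to $\Pi_2^i$ along the $x_2$-direction through an affine correspondence between $\Gamma_i^\varepsilon$ and $\gamma_i^\varepsilon$, then sum over $i$; the paper's map $\widetilde{x}_2=\frac{1-\delta}{\delta}x_2+\varepsilon\delta$ is essentially your fallback, and its Jacobian supplies the factor $\frac{1-\delta}{\delta}$.

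One bookkeeping slip: when you average over all $y_2\in\Gamma_i^\varepsilon$, the connecting segments have length up to $\varepsilon$, not $(1-\delta)\varepsilon$, so the second term is $2^{p-1}(1-\delta)\varepsilon^{p}$ rather than $2^{p-1}((1-\delta)\varepsilon)^{p}$. This is still absorbed into the $a^{p}\frac{1-\delta}{\delta}$ part of $K$ once $\varepsilon\le a$, and the paper's affine map has the same looseness for $\delta>\frac12$, where its segments reach length $\delta\varepsilon$.
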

\begin{proof}
Fix the point $(x_1,x_2)\in\Pi_1^i.$ By using the Newton-Leibnitz
formula and H$\ddot{o}$lder inequality, we have
$$u_\varepsilon(x_1,x_2)=u_\varepsilon(x_1,x_2)-u_\varepsilon(0,x_2)=\int\limits_{0}^{x_1}
\frac{\partial u_\varepsilon}{\partial
x_1}\,dx_1\leq\int\limits_{0}^{a}\frac{\partial
u_\varepsilon}{\partial x_1}\,dx_1.$$ Hence,
$$|u_\varepsilon|^p(x_1,x_2)\leq\left(\int\limits_{0}^{a}\frac{\partial
u_\varepsilon}{\partial x_1}\,dx_1\right)^p\leq
\int\limits_{0}^{a}|\nabla u_\varepsilon|^p\,dx_1\left(\int\limits_{0}^{a}\,dx_1\right)^{\frac{p}{q}}=a^{\frac{p}{q}}\int\limits_{0}^{a}|\nabla u_\varepsilon|^p\,dx_1.$$ Then, by
integrating the last inequality with respect to $x_2$ and after that
with respect to $x_1$ over $\Pi_1^i,$ we obtain that
\begin{equation}\label{1i}
\int\limits_{\Pi_1^i}u_\varepsilon^p\,dx\leq
a^{\frac{p}{q}+1}\int\limits_{\Pi_1^i}|\nabla u_\varepsilon|^p\,dx.
\end{equation}
Now choose the point $(x_1,\widetilde{x}_2)\in\Pi_2^i$ such that
$\widetilde{x}_2=\frac{1-\delta}{\delta}x_2+\varepsilon\delta.$ This
means that $(x_1,\widetilde{x}_2)\in\Pi_2^i$ if and only if
$(x_1,{{x}_2})\in\Pi_1^i.$ Taking into account an evident inequality
$$(a+b)^p\leq 2^p(p+1)(a^p+b^p),\ \text{ for } p>2$$
and again using the Newton-Leibnitz
formula, we fined that
$$u_\varepsilon(x_1,\widetilde{x}_2)=u(x_1,x_2)+\int\limits_{x_2}^{\widetilde{x}_2}
\frac{\partial u_\varepsilon}{\partial x_2}\,d\widetilde{x}_2.$$
Consequently, $$
\begin{aligned}&|u_\varepsilon|^p(x_1,\widetilde{x}_2)\leq 2^p(p+1)|u_\varepsilon|^p(x_1,x_2)+2^p(p+1)\left(\int\limits_{x_2}^{\widetilde{x}_2}
\frac{\partial u_\varepsilon}{\partial x_2}\,d\widetilde{x}_2\right)^p\leq\\
&2^p(p+1)|u_\varepsilon|^p(x_1,x_2)+2^p(p+1)(\widetilde{x}_2-x_2)^{\frac{p}{q}}\int\limits_{x_2}^{\widetilde{x}_2}|\nabla
u_\varepsilon|^p\,d\widetilde{x}_2.
\end{aligned}$$ Integrating the last
inequality over $\Pi_2^i$ and substituting the integral on the
right-hand side by the greater integral, we get that

$$\int\limits_{\Pi_2^i}|u_\varepsilon|^p\,dx_1\,d\widetilde{x}_2\leq 2^p(p+1)\frac{1-\delta}{\delta}
\int\limits_{\Pi_1^i}|u_\varepsilon|^p\,dx_1\,dx_2+2^p(p+1)\varepsilon^{\frac{p}{q}+1}(1-\delta)^{\frac{p}{q}+1}\int\limits_{\Pi_2^i}|\nabla
u_\varepsilon|^p\,dx_1\,d\widetilde{x}_2.$$ Finally, by applying the
estimate (\ref{1i}) to the first integral on the right-hand side and
substituting both integrals by the greater integral, we obtain that
\begin{equation}\label{2i}
\int\limits_{\Pi_2^i}|u_\varepsilon|^p\,dx\leq
2^p(p+1)\left(a^{\frac{p}{q}+1}\frac{1-\delta}{\delta}+\varepsilon^{\frac{p}{q}+1}(1-\delta)^{\frac{p}{q}+1}\right)\int\limits_{\Pi_1^i\cup\Pi_2^i}|\nabla
u_\varepsilon|^p\,dx.
\end{equation}
By summarizing up the inequalities (\ref{1i}) and (\ref{2i}) with
respect to $i,$ we obtain the desired estimate:
\begin{equation*}
\begin{aligned}
\int\limits_{\Pi_2}|u_\varepsilon|^p\,dx=\int\limits_{\bigcup\limits_{i}\Pi_2^i}|u_\varepsilon|^p\,dx\leq
2^p(p+1)\left(a^{\frac{p}{q}+1}\frac{1-\delta}{\delta}+\varepsilon^{\frac{p}{q}+1}(1-\delta)^{\frac{p}{q}+1}\right)
\int\limits_{\bigcup\limits_{i}\left(\Pi_1^i\cup\Pi_2^i\right)}|\nabla
u_\varepsilon|^p\,dx=\\=2^p(p+1)\left(a^{\frac{p}{q}+1}\frac{1-\delta}{\delta}+\varepsilon^{\frac{p}{q}+1}(1-\delta)^{\frac{p}{q}+1}\right)
\int\limits_{\Omega}|\nabla u_\varepsilon|^p\,dx.
\end{aligned}
\end{equation*}
\end{proof}

We also need the following well-known Lemmas
\begin{lemma}\label{ziml}
Let $u\in W_1^{1}(B).$ Then
$$|u(x)-u_B|\leq 2\int\limits_{B}\frac{|\nabla u(y)|}{|x-y|}\,dy.$$
\end{lemma}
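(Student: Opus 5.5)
We prove the estimate first for $u\in C^1(\overline{B})$ and then pass to $W_1^1(B)$ by density: smooth functions are dense in $W_1^1(B)$, and both sides of the inequality behave well under the limit. The right-hand side is continuous in $u$ with respect to the $W_1^1$ norm, since the operator $f\mapsto\int_B |f(y)||x-y|^{-1}\,dy$ is bounded on $L^1(B)$ after integration in $x$, because $\int_B|x-y|^{-1}\,dx\le 2\pi r$. Hence the inequality passes to a.e. $x$ along a subsequence. Throughout, $B=B(\cdot,r)$ is a ball in $\mathbb{R}^2$, and we use its convexity.

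\textbf{Step 1: representation along segments.} Fix $x,y\in B$. By convexity the segment joining them lies in $B$, and the fundamental theorem of calculus gives
$$u(x)-u(y)=-\int_0^{|x-y|}\frac{\partial u}{\partial \omega}(x+t\omega)\,dt,\qquad \omega=\frac{y-x}{|x-y|}.$$
Averaging over $y\in B$ then yields
$$|u(x)-u_B|\le \frac{1}{\pi r^2}\int_B\int_0^{|x-y|}|\nabla u(x+t\omega)|\,dt\,dy .$$

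\textbf{Step 2: polar coordinates centered at $x$.} Write $y=x+s\omega$ with $0<s<d(\omega)$, where $d(\omega)\le 2r$ is the distance from $x$ to $\partial B$ in the direction $\omega$. Extend $|\nabla u|$ by zero outside $B$ and call the result $g$. The right-hand side is then bounded by
$$\frac{1}{\pi r^2}\int_{S^1}\int_0^{2r}\int_0^{\infty} g(x+t\omega)\,dt\, s\,ds\,d\omega
=\frac{1}{\pi r^2}\cdot\frac{(2r)^2}{2}\int_{S^1}\int_0^\infty g(x+t\omega)\,dt\,d\omega .$$
The $s$- and $t$-integrals decouple: $t$ is cut off automatically by $g$ vanishing outside $B$, and $s$ by $d(\omega)\le 2r$. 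Returning to Cartesian coordinates via $dz=t\,dt\,d\omega$, we get
$$\int_{S^1}\int_0^\infty g(x+t\omega)\,dt\,d\omega=\int_B\frac{|\nabla u(z)|}{|x-z|}\,dz .$$

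\textbf{Step 3: the constant.} The prefactor equals $\frac{1}{\pi r^2}\cdot 2r^2=\frac{2}{\pi}$, which is at most $2$. This gives the claim, in fact with the better constant $2/\pi$.

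The main obstacle is the bookkeeping in Step 2. One has to justify replacing the upper limit $|x-y|$ in $t$ by the full length of the chord inside $B$, which we do by enlarging the integral since the integrand is nonnegative. One also has to justify Fubini/Tonelli on the product of the sphere with the two radial variables, which is fine since everything is nonnegative. The rest is a direct computation.
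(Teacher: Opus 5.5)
Your proof is correct, and it is essentially the argument of Gilbarg--Trudinger, Lemma 7.16, which the paper cites rather than proving the lemma itself. The shared steps are the segment representation, averaging over $y$, extending $|\nabla u|$ by zero, decoupling the radial integrals in polar coordinates about $x$, and a density argument; specialized to $n=2$ and $S=\Omega=B$, the constant of that lemma, $d^n/(n|S|)=(2r)^2/(2\pi r^2)=2/\pi$, is exactly what you obtain, which is stronger than the stated constant $2$.
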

For the proof see in \cite[Lemma 7.16]{gilb}. The following important inequality was derived in \cite{Hajl}.
\begin{lemma}\label{le0}
Let $B=B(x,r)\subset\Omega,\ u\in C^{\infty}(\Omega),\ \Gamma\subset
\partial\Omega.$ Then
$$\inf\limits_{y\in \Gamma\cap B}\int\limits_B\frac{|\nabla
u(z)|}{|y-z|}\,dz\leq\int\limits_B\frac{|\nabla u(z)|}{|x-z|}\,dz.$$
\end{lemma}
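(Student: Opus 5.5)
The plan is to get the inequality with constant exactly $1$ by evaluating the left-hand side at a single, explicitly chosen point of $\Gamma\cap B$, not by averaging. Write
$$F(y):=\int\limits_B\frac{|\nabla u(z)|}{|y-z|}\,dz ,\qquad y\in\mathbb{R}^2 .$$
Then the claim reads $\inf_{y\in\Gamma\cap B}F(y)\le F(x)$. It therefore suffices to show that $x$ itself, or a sequence $y_k\in\Gamma\cap B$ with $F(y_k)\to F(x)$, is admissible in the infimum.

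\emph{Step 1 (continuity of $F$).} Since $u\in C^\infty(\Omega)$, $|\nabla u|$ is bounded on $\overline B$. The kernel $|y-z|^{-1}$ is locally integrable in $\mathbb{R}^2$ and uniformly so in $y$: $\int_{B(y,\eta)}|y-z|^{-1}\,dz=2\pi\eta$. Splitting $B$ into $B(y,\eta)$ and its complement, and using uniform continuity of the kernel on the complement, gives that $F$ is continuous on $\mathbb{R}^2$. Consequently $\inf_{\Gamma\cap B}F=\min_{\overline{\Gamma\cap B}}F$, and the infimum is unchanged if $\Gamma\cap B$ is replaced by its closure.

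\emph{Step 2 (choice of the point).} In the setting where the lemma is used, the ball is centred at a point of the boundary segment $\Gamma$; this is implicit in the hypothesis $B\subset\Omega$ with $\Omega$ the closed rectangle, where the relevant centres lie on $\Gamma$. I will make this explicit, i.e.\ that $x\in\overline{\Gamma\cap B}$. Then, by Step 1, taking $y_k\to x$ with $y_k\in\Gamma\cap B$, or simply $y=x$ when $x\in\Gamma$, yields
$$\inf\limits_{y\in\Gamma\cap B}F(y)\le \lim_k F(y_k)=F(x),$$
which is the statement.

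\emph{Main obstacle.} The hardest point is justifying that the centre $x$ may be taken in $\overline{\Gamma\cap B}$. For a centre $x$ at positive distance from $\Gamma$, the constant-$1$ inequality can fail. Averaging $F$ over $\Gamma\cap B$ with respect to arc length $\ell$ gives
$$\inf F\le\int\limits_B|\nabla u(z)|\,K(z)\,dz,\qquad K(z)=\frac1\ell\int\limits_{\Gamma\cap B}\frac{dy}{|y-z|},$$
and $K(z)\le|x-z|^{-1}$ is false for $z$ near the segment. So I will first check, in the place where Lemma \ref{le0} is invoked (the pointwise estimate combined with Lemma \ref{ziml}), that the balls are centred on $\Gamma$, and I will state the lemma in that form. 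If that check fails, the averaging bound above is the fallback. It gives the inequality only with a constant depending on the ratio $\ell/r$; for the geometry of $\Gamma_\varepsilon$, where $\ell\ge\varepsilon\delta$ once $r\ge\varepsilon$, this ratio is bounded below, so that constant would be absorbed into the Hardy constant.
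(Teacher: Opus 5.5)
\textbf{Step 2 does not prove the lemma.} It replaces the hypothesis by one that contradicts it. The centre cannot lie in $\overline{\Gamma\cap B}$: if $x_1=0$, then $B(x,r)$ contains points with $z_1<0$, so $B\not\subset\Omega$. In fact $B(x,r)\subset\Omega$ forces $\mathrm{dist}(x,\Gamma)\ge r$. Hence $\Gamma\cap B$ is either empty, in which case the left-hand side is $+\infty$, or the single tangency point $y_0=(0,x_2)$.

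Your variant also does not match how the lemma is used in Theorem \ref{le1}. There $B=B(\overline{x},\rho_\varepsilon(x))$ is centred at $\overline{x}\in\Gamma$, but the right-hand side is $F(x)$ at the \emph{interior} point $x$, not at the centre. Moreover, the infimum runs over $\Gamma_\varepsilon\cap B$, which does not contain $\overline{x}$ when $x\in\Pi_2$. So the choice $y=x$ (or $y_k\to x$) is never available where the lemma is needed. Step 1 is fine but does not touch the real difficulty.

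\textbf{The statement itself is false.} Your remark about centres away from $\Gamma$ already points this way; the paper gives no proof and only refers to the literature. Take $x=(r,x_2)$ with $B(x,r)\subset\Omega$, so that $\Gamma\cap B=\{y_0\}$, and fix $\eta<r/2$. Let $u(z)=(z_2-x_2)\psi(z)$, where $\psi$ is smooth, $0\le\psi\le1$, $\psi=1$ on $B(y_0,\eta)$, $\mathrm{supp}\,\psi\subset B(y_0,2\eta)$ and $|\nabla\psi|\le 2/\eta$. Then $|\nabla u|\le 5$ and $|\nabla u|=1$ on $B(y_0,\eta)$. Also, $B$ contains the sector $\{y_0+s(\cos\theta,\sin\theta):\ 0<s<\eta,\ |\theta|<\pi/3\}$. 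Hence
\[
F(y_0)\ge\frac{2\pi}{3}\,\eta,\qquad F(x)\le\frac{20\pi\eta^2}{r-2\eta},
\]
so $F(y_0)>C\,F(x)$ for every fixed $C$ once $\eta$ is small.

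\textbf{The fallback fails for the same reason.} A constant depending on $\ell/r$ cannot be absorbed. When $\Gamma\cap B$ is a segment of length $\ell$ (the situation after reflection in Theorem \ref{le1}), $K(z)$ grows like $\ell^{-1}\log\bigl(\ell/\mathrm{dist}(z,\Gamma)\bigr)$ near the segment. For fixed $x$ with $\mathrm{dist}(x,\Gamma)>0$, however, $|x-z|^{-1}$ stays bounded there. So $K\le C|x-z|^{-1}$ fails for every $C$.

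Concretely, take a smooth $u$ that vanishes near $\Gamma$, has $|\nabla u|\approx\eta^{-1}$ on a strip of width $\eta$ along $\Gamma$, and is constant beyond it. This gives $\inf_y F(y)\gtrsim\log(\ell/\eta)$, while $F(x)$ stays bounded as $\eta\to0$. A constant depending on $r/\ell$ is legitimately available only for $|u_B|$, via a scale-invariant $W^1_1$ trace inequality on the chord, not for this infimum.
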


\begin{lemma}\label{ziml2}
If $0<\alpha<2$ and $r>0,$ then there exists a constant $C>0,\,
C\leq\left(\frac{1}{2}\right)^{\alpha-2},$ such that for each
$x\in \mathbb{R}^2,$
$$\int\limits_{B(x,r)}\frac{|u(y)|}{|x-y|^{2-\alpha}}\,dy\leq C r^\alpha M(u)(x).$$
\end{lemma}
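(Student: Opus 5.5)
The plan is to use a dyadic decomposition of the ball $B(x,r)$ into annuli centred at $x$. On each annulus the kernel $|x-y|^{\alpha-2}$ is essentially constant, so each piece is controlled by an average of $|u|$ over a ball centred at $x$. Each such average is bounded by the maximal function (\ref{Mu}), taken of $|u|$ as is implicit in the statement. The powers of $2$ produced in this way then give exactly the factor $\left(\frac{1}{2}\right)^{\alpha-2}=2^{2-\alpha}$ from the statement, multiplied by a convergent geometric series in $2^{-\alpha}$.

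\emph{Step 1 (decomposition).} For $k=0,1,2,\dots$ put
$$A_k:=\{y:\ r2^{-k-1}<|x-y|\leq r2^{-k}\}.$$
Up to the null set $\{x\}$ we have $B(x,r)=\bigcup_{k\ge0}A_k$, so
$$\int\limits_{B(x,r)}\frac{|u(y)|}{|x-y|^{2-\alpha}}\,dy=\sum_{k=0}^{\infty}\int\limits_{A_k}\frac{|u(y)|}{|x-y|^{2-\alpha}}\,dy .$$

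\emph{Step 2 (estimate on one annulus).} Since $2-\alpha>0$, on $A_k$ we have $|x-y|^{\alpha-2}\le (r2^{-k-1})^{\alpha-2}=2^{(k+1)(2-\alpha)}r^{\alpha-2}$. Enlarging $A_k$ to the ball $B(x,r2^{-k})$ and using the definition of $M$ gives
$$\int\limits_{B(x,r2^{-k})}|u|\,dy\le \pi r^2 2^{-2k}M(u)(x).$$
Hence the $k$-th term is at most
$$\pi\, 2^{2-\alpha}\, 2^{-k\alpha}\, r^{\alpha}M(u)(x).$$

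\emph{Step 3 (summation).} Because $\alpha>0$, the series $\sum_k 2^{-k\alpha}=(1-2^{-\alpha})^{-1}$ converges. This gives the claimed inequality with
$$C=\frac{\pi}{1-2^{-\alpha}}\left(\frac{1}{2}\right)^{\alpha-2}.$$
The condition $\alpha<2$ is used in Step 2, to bound the kernel on each annulus by its value at the inner radius. The condition $\alpha>0$ is used in Step 3, for the convergence of the series.

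\emph{Main obstacle: the constant.} The step I expect to be the real difficulty is matching the numerical constant as worded, $C\le\left(\frac{1}{2}\right)^{\alpha-2}$, without the extra factor $\pi/(1-2^{-\alpha})$. The test function $u\equiv1$ shows that this factor cannot be removed entirely. For $u\equiv1$ we have $M(u)\equiv1$, while the left-hand side equals $2\pi r^\alpha/\alpha$. This exceeds $2^{2-\alpha}r^\alpha$ for every $\alpha\in(0,2)$: at $\alpha=1$ it is $2\pi r$ against $2r$, and as $\alpha\to0$ it blows up while $2^{2-\alpha}$ stays bounded.

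So the best I can establish is the stated bound in the form
$$C\le c_\alpha\left(\frac{1}{2}\right)^{\alpha-2},\qquad c_\alpha=\frac{\pi}{1-2^{-\alpha}},$$
and the dyadic argument above delivers precisely this. My first attempt to tighten it would be to replace the annuli $A_k$ by a finer geometric partition with ratio $\lambda\to1$. This improves $c_\alpha$ towards the sharp value $2\pi/\alpha$ computed above, but by the test function it can never bring $c_\alpha$ below $1$. For the later applications in the paper only the structure $C\,r^\alpha M(u)(x)$, with $C$ depending on $\alpha$ alone, should matter.
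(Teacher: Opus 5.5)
Your dyadic-annuli argument is the standard proof behind Ziemer's Lemma~2.8.3, which the paper cites instead of giving a proof of its own, and it correctly yields $C=\pi 2^{2-\alpha}/(1-2^{-\alpha})$. Your test function $u\equiv1$ (left side $2\pi r^\alpha/\alpha$, $M(u)\equiv1$) also rightly shows that the stated bound $C\le(\frac{1}{2})^{\alpha-2}$ is false; it apparently keeps only the kernel factor $2^{2-\alpha}$ of the annulus estimate and drops $\pi$ and the geometric series, so what you prove is the correct version of the lemma.

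One slip in your closing remark: with your Step~2 bound, a partition of ratio $\lambda$ gives $\pi\lambda^{2-\alpha}/(1-\lambda^{-\alpha})$, which blows up as $\lambda\to1$ rather than approaching $2\pi/\alpha$, and for $\alpha=1$ the dyadic choice is already optimal, giving $4\pi$. The sharp constant $2\pi/\alpha$ comes instead from the exact identity $\int_{B(x,r)}|u(y)|\,|x-y|^{\alpha-2}\,dy=r^{\alpha-2}\Phi(r)+(2-\alpha)\int_0^r s^{\alpha-3}\Phi(s)\,ds$ together with $\Phi(s)=\int_{B(x,s)}|u|\,dy\le\pi s^2M(u)(x)$.
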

For the proof we refer to \cite[Lemma 2.8.3]{ziemer}. We will use
this result for the case $\alpha=1.$
%consequently, the constant
%$C=8\pi.$
Here, as usual, $M(u)$ stands for the Hardy-Littlewood maximal
operator defined in (\ref{Mu}). Moreover, the following important theorem (the
Hardy-Littlewood theorem on Maximal Operator) will be used:
\begin{theorem}\label{HL}
If $u\in L_p(\mathbb{R}^2),\ p>1,$ then there exists a constant
$\mathrm{C}>0 $
%=\frac{p}{p-1}2^p5^n$
such that
$$\|M(u)\|_p\leq C\|u\|_p.$$
\end{theorem}
For the proof see e.g. in \cite[Theorem 2.8.2]{ziemer}.

\section{The main results.}\label{s2}
Consider the function
\begin{equation}\label{max}
\rho_\varepsilon(x)=\begin{cases} \rho(x), &\text{if
$x\in\Pi_1$,}\\
\rho(x)+(1-\delta)\frac{\varepsilon}{2}, &\text{if $x\in\Pi_2.$}
\end{cases}
%\begin{aligned} \max\,(r_1(x),
%r_2(x))<\rho(x)+(1-\delta)\frac{\varepsilon}{2},\\
%M_{r_1}(f)<M_{r_2}(f)<M_{\rho(x)+(1-\delta)\frac{\varepsilon}{2}}(f).
%\end{aligned}
\end{equation}

Our first main result is the following pointwise inequality:
\begin{theorem}\label{le1}
Let $u_\varepsilon\in C^{\infty}(\Omega, \Gamma_\varepsilon).$  Then
there exists a constant $C,\ C\leq 4,$ such that the pointwise
inequality
\begin{equation}\label{pwH}
|u_\varepsilon(x)|\leq
C\rho_\varepsilon(x)M_{\rho_\varepsilon(x)}(|\nabla
u_\varepsilon|\chi_{B\left(\overline{x},
\rho_\varepsilon(x)\right)}(x))
\end{equation}
holds, where $\overline{x}\in\Gamma$ satisfies
$|x-\overline{x}|=\rho(x).$
\end{theorem}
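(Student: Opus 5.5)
The plan follows the Hajłasz-type argument for pointwise Hardy inequalities, combining Lemmas \ref{ziml}, \ref{le0} and \ref{ziml2}. Fix $x\in\Omega$ and put $R:=\rho_\varepsilon(x)$, $B:=B(\overline{x},R)$, where $\overline{x}\in\Gamma$ is the foot of the perpendicular from $x$, so that $|x-\overline{x}|=\rho(x)\le R$ and $x\in B$.

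The first step is to check that $B$ meets $\Gamma_\varepsilon$. If $x\in\Pi_1$, then $\overline{x}\in\overline{\Gamma_\varepsilon}$ itself. If $x\in\Pi_2$, then $\overline{x}$ lies on some $\gamma^\varepsilon_i$, whose length is $(1-\delta)\varepsilon$. Hence $\overline{x}$ is within $(1-\delta)\frac{\varepsilon}{2}$ of an endpoint of that gap, and that endpoint belongs to $\overline{\Gamma_\varepsilon}$. This is exactly why the shift $(1-\delta)\frac{\varepsilon}{2}$ appears in (\ref{max}), and it is consistent with $r_2(x)<\rho(x)+(1-\delta)\frac{\varepsilon}{2}$. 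In both cases $\Gamma_\varepsilon\cap B\neq\varnothing$, and $u_\varepsilon=0$ near every point of this set. Only the part $B\cap\Omega$ is used, extending $u_\varepsilon$ by zero or restricting integrals to $\Omega$ as needed.

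Next, take any $y\in\Gamma_\varepsilon\cap B$. Since $u_\varepsilon(y)=0$, the triangle inequality and Lemma \ref{ziml}, applied at both $x$ and $y$, give
$$|u_\varepsilon(x)|\le |u_\varepsilon(x)-u_B|+|u_B-u_\varepsilon(y)|\le 2\int_B\frac{|\nabla u_\varepsilon(z)|}{|x-z|}\,dz+2\int_B\frac{|\nabla u_\varepsilon(z)|}{|y-z|}\,dz .$$
The inequality holds for every such $y$, so we may take the infimum over $y\in\Gamma_\varepsilon\cap B$ in the second term. By Lemma \ref{le0}, with $\Gamma_\varepsilon$ in the role of $\Gamma$, this infimum is bounded by the first integral, and therefore
$$|u_\varepsilon(x)|\le 4\int_B\frac{|\nabla u_\varepsilon(z)|\chi_B(z)}{|x-z|}\,dz .$$

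Finally, since $B\subset B(x,2R)$, Lemma \ref{ziml2} with $\alpha=1$, applied to $f=|\nabla u_\varepsilon|\chi_B$, bounds the last integral by $C R\,M(f)(x)$. The supremum defining $M(f)(x)$ can be restricted to radii of order $R$, because $f$ vanishes outside $B$: balls around $x$ of larger radius only dilute the average. This gives the truncated maximal function $M_{\rho_\varepsilon(x)}$, up to adjusting the radius by a fixed factor.

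The main obstacle is the constant bookkeeping needed to reach exactly $C\le4$ together with the truncation radius $\rho_\varepsilon(x)$ rather than $2\rho_\varepsilon(x)$. The first thing I would try is to apply Lemma \ref{ziml2} directly on a ball centred at $x$ of radius about $R$ and absorb the resulting factors into the normalisation of $M_R$. I would also need to justify using Lemma \ref{le0} for a ball centred at the boundary point $\overline{x}$ rather than inside $\Omega$, which I expect to handle by working with $B\cap\Omega$.
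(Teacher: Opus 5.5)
Your outline is essentially the paper's proof: the triangle inequality through $u_B$ at a point $y\in\Gamma_\varepsilon\cap B$, Lemma~\ref{ziml} twice, Lemma~\ref{le0} to absorb the $y$-potential, then Lemma~\ref{ziml2}. The step you postpone, however, is not bookkeeping, and it cannot be carried out. Since $B=B(\overline{x},R)$ is not centred at $x$, Lemma~\ref{ziml2} only yields $R\,M_{2R}(f)(x)$. But $M_{2R}f(x)$ is not bounded by any multiple of $M_Rf(x)$. The paper passes over this by applying Lemma~\ref{ziml2} to $B(\overline{x},R)$ as if it were centred at $x$. In fact (\ref{pwH}) with $M_{\rho_\varepsilon(x)}$ is false. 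Let $u(z)=\psi(z_1)$ with $\psi\in C^\infty$, $\psi=0$ on $[0,\eta/3]$, $\psi=1$ on $[\eta,\infty)$, $0\le\psi'\le 3/\eta$. For $x\in\Pi_1$ with $\rho=\rho(x)\ge 2\eta$ we have $u(x)=1$. Every $B(x,r)$ with $r\le\rho$ lies in $\{z_1\ge 0\}$, and it meets the layer $\{z_1\le\eta\}$ only if $r\ge\rho-\eta\ge\rho/2$, and then only in a cap of area at most $2\sqrt{2}\,r^{1/2}\eta^{3/2}$. Hence $\rho\,M_\rho(|\nabla u|)(x)\le c\sqrt{\eta/\rho}\to 0$ as $\eta\to 0$. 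The bound $C\le 4$ is also out of reach. For $n=2$, $\alpha=1$ the sharp constant in Lemma~\ref{ziml2} is $2\pi$ (take $u=\chi_{B(x,r)}$), not at most $2$. The paper's own chain ends with $8$ anyway.

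The absorption step is also unsound. Lemma~\ref{le0} is false in general. For the same $u$ and $B=B(\overline{x},\rho(x))$, the integral $\int_B|\nabla u(z)|\,|y-z|^{-1}\,dz$ is $\gtrsim\log(\rho(x)/\eta)$ at every $y\in\Gamma\cap B$, while at $x$ it stays bounded as $\eta\to0$. What is really needed is a bound on $|u_B|$ coming from $u=0$ on $\Gamma_\varepsilon\cap B$. The trace and Poincar\'e inequalities on the half-disc $B\cap\Omega$ give $|u_B|\le C\,|\Gamma_\varepsilon\cap B|^{-1}\int_{B\cap\Omega}|\nabla u|\,dz$, so the pointwise constant is of order $R/|\Gamma_\varepsilon\cap B|$. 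On $\Pi_1$ this ratio is bounded in terms of $\delta$ and the shape of $\Omega$; on $\Pi_2$ it is not. Take $x=(\rho,c)$ with $c$ the midpoint of a gap $\gamma^\varepsilon_i$ and $\rho<\varepsilon\delta$. The ball $B(\overline{x},\rho+(1-\delta)\frac{\varepsilon}{2})$ meets $\Gamma_\varepsilon$ in total length only $2\rho$. Your intermediate inequality $|u(x)|\le 4\int_B|\nabla u(z)|\,|x-z|^{-1}dz$ then fails for a mollified $u=\psi(\mathrm{dist}(\cdot,\Gamma_\varepsilon))$. Here $u(x)=1$, while inside $B$ the gradient, of size $O(\eta^{-1})$, lives in two boxes of area $O(\eta(\eta+\rho))$ at distance $\ge(1-\delta)\varepsilon/4$ from $x$. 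So the right-hand side is $O\big((\eta+\rho)/((1-\delta)\varepsilon)\big)\to 0$, and the same computation defeats every maximal-function version. A correct statement therefore needs three changes:
\begin{itemize}
\item a larger truncation radius, such as $2\rho_\varepsilon(x)$;
\item a larger $\rho_\varepsilon$ on $\Pi_2$ (e.g. $\rho+\varepsilon$, so that $B$ always contains a whole $\Gamma^\varepsilon_i$);
\item a $\delta$-dependent constant.
\end{itemize}
A minor point: extending by zero across $\gamma_\varepsilon$ does not give a $W^1_1(B)$ function. Use the reflection, as the paper does, or the convex-domain form of Lemma~\ref{ziml} on $B\cap\Omega$.
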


\begin{proof}%[Proof of Theorem {\rm\ref{le1}}.]
Choose the point $x\in\Omega$ and denote by
$B:=B(\overline{x},\rho_\varepsilon(x)),$ where
$\rho_\varepsilon(x)$ is defined in (\ref{max}).

Then $B\cap\Gamma_\varepsilon\neq{\o}$ for each $x\in\Omega.$ Extend
the function $u_\varepsilon$ in
$\mathbb{R}^2\setminus\overline{\Omega}$ by reflecting it across the
boundary. By applying Lemma \ref{ziml} to the extended
$u_\varepsilon,$ we have for any $y\in B\cap\Gamma_\varepsilon:$
\begin{equation}\label{pr11}
\begin{aligned}
|u_\varepsilon(x)|=|u_\varepsilon(x)-u_\varepsilon(y)|\leq|u_\varepsilon(x)-{u_\varepsilon}_B|+|u_\varepsilon(y)-{u_\varepsilon}_B|\leq\\
\leq 2\left(\int\limits_B\frac{|\nabla
u_\varepsilon(z)|}{|x-z|}\,dz+\int\limits_B\frac{|\nabla
u_\varepsilon(z)|}{|y-z|}\,dz\right).
\end{aligned}
\end{equation}

Hence, by applying Lemma \ref{le0} to (\ref{pr11}), we obtain that
\begin{equation}\label{pr2}
%\begin{aligned}
|u_\varepsilon(x)|\leq 4\int\limits_B\frac{|\nabla
u_\varepsilon(z)|}{|x-z|}\,dz.
%\end{aligned}
\end{equation}
Finally, taking into account Lemma \ref{ziml2} and (\ref{max}), we
have that
\begin{equation}\label{pr1}
\begin{aligned}
|u_\varepsilon(x)|\leq
4c_2\,\rho_\varepsilon(x)\,M_{\rho_\varepsilon(x)}(|\nabla
u_\varepsilon|)(x)=\\=
\begin{cases}8\rho(x)\,M_{\rho(x)}(|\nabla
u_\varepsilon|), &\text{\!\!$x\in\Pi_1\cap
B\left(\overline{x},\rho(x)\right),$}\\
8\!\left(\rho(x)\!+\!\frac{(1-\delta)}{2}\varepsilon\right)
\!M_{\rho(x)+\frac{(1-\delta)}{2}\varepsilon}(|\nabla
u_\varepsilon|), &\text{\!\!$x\in\Pi_2\cap
B\!\left(\overline{x},\rho(x)\!+\!\frac{(1-\delta)}{2}\varepsilon\right).$}
\end{cases}
\end{aligned}
\end{equation}
The proof is complete.
\end{proof}

The next theorem generalizes the classical
Hardy-type inequalities to a much more wide class of
functions.
\begin{theorem}\label{main}
Let $\rho_\varepsilon(x)$ be the function defined in
{\rm(\ref{max})} and  $p>1,$ $0\leq\alpha<\alpha_0.$
 Then the estimate
\begin{equation}\label{Hardy}
\int\limits_{\Omega}\rho_\varepsilon^{-p+\alpha}(x)|u_\varepsilon|^p(x)\,dx\leq
C\int\limits_{\Omega}\rho_\varepsilon^{\alpha}(x)|\nabla
u_\varepsilon(x)|^p\,dx
\end{equation}
holds for each fixed $\varepsilon$ for all functions
$u_\varepsilon\in W^{1}_{p}(\Omega, \Gamma_\varepsilon),$ where the
constant $C$ does not depend on $u_\varepsilon$ and on
$\varepsilon.$ %($C_1\leq 1600$).
\end{theorem}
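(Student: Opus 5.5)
We will derive (\ref{Hardy}) from the pointwise estimate of Theorem \ref{le1}, following the Hajłasz--Kinnunen scheme. It suffices to prove (\ref{Hardy}) for $u_\varepsilon\in C^{\infty}(\Omega,\Gamma_\varepsilon)$. The general case $u_\varepsilon\in W^1_p(\Omega,\Gamma_\varepsilon)$ then follows by density and Fatou's lemma, since the weights $\rho_\varepsilon^{\alpha}$ with $\alpha\ge 0$ are bounded on $\Omega$ and $\rho_\varepsilon^{-p+\alpha}$ is nonnegative.

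The starting point is (\ref{pwH}), raised to the power $p$ and multiplied by $\rho_\varepsilon^{-p+\alpha}(x)$:
$$\rho_\varepsilon^{-p+\alpha}(x)|u_\varepsilon(x)|^p\le 4^p\rho_\varepsilon^{\alpha}(x)\bigl(M_{\rho_\varepsilon(x)}(|\nabla u_\varepsilon|)(x)\bigr)^p .$$
For $\alpha=0$ we integrate over $\Omega$, extend $|\nabla u_\varepsilon|$ by reflection (this at most doubles the $L_p$ norm, and by (\ref{max}) the balls stay in a fixed neighbourhood of $\overline\Omega$), bound $M_{\rho_\varepsilon}$ by $M$, and apply Theorem \ref{HL}. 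This gives (\ref{Hardy}) with $C=4^p\cdot 2\,C_{HL}^p$, independent of $\varepsilon$.

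For $0<\alpha<\alpha_0$ the weight cannot be moved inside the maximal function directly. We use the key geometric fact that for $z\in B(\overline{x},\rho_\varepsilon(x))$ one has $\rho(z)\le\rho_\varepsilon(x)$, hence $\rho_\varepsilon(z)\le\rho_\varepsilon(x)+(1-\delta)\varepsilon/2\le 2\rho_\varepsilon(x)$ whenever $x\in\Pi_2$. The same bound holds for $x\in\Pi_1$ once $\rho(x)\gtrsim\varepsilon$, while points of $\Pi_1$ with $\rho(x)<\varepsilon$ are treated with the crude bound $\rho_\varepsilon(z)\le\rho(x)+\varepsilon$ combined with the smallness of the ball. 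The comparison goes the wrong way for moving the weight in, so instead we take an exponent $0<\beta<1$ and write, via Hölder, $M(f)\le \bigl(M(f^{s})\bigr)^{1/s}$ with $1<s<p$. Inserting $|\nabla u_\varepsilon|=\rho_\varepsilon^{-\alpha/p}\cdot\rho_\varepsilon^{\alpha/p}|\nabla u_\varepsilon|$ then reduces the problem to showing that $\rho_\varepsilon^{\alpha}$ behaves like a Muckenhoupt $A_{p/s}$ weight, uniformly in $\varepsilon$. Concretely, we will prove
$$\rho_\varepsilon^{\alpha}(x)\bigl(M_{\rho_\varepsilon(x)}|\nabla u_\varepsilon|\bigr)^p\le C\bigl(M(\rho_\varepsilon^{\alpha s/p}|\nabla u_\varepsilon|^s\chi)\bigr)^{p/s}(x)$$
on each ball, where $\chi$ is the indicator of the ball. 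This needs $\fint_B\rho_\varepsilon^{-\alpha s'/p\cdot(\ldots)}\lesssim \rho_\varepsilon(x)^{-(\ldots)}$, which holds since $\rho_\varepsilon^{-\gamma}$ is integrable near $\Gamma$ in the variable $x_1$ precisely when $\gamma<1$. This requirement is what fixes $\alpha_0$; we expect $\alpha_0=p-1$, in analogy with (\ref{dist}). Applying Theorem \ref{HL} with exponent $p/s>1$ then finishes the argument.

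The main obstacle is this last weighted averaging step: the two-sided comparability of $\rho_\varepsilon$ on the balls $B(\overline x,\rho_\varepsilon(x))$ must hold uniformly in $\varepsilon$. The function $\rho_\varepsilon$ jumps by $(1-\delta)\varepsilon/2$ across the lines $l_i$, and in $\Pi_1$ it vanishes on $\Gamma$. We will handle this by noting that $\rho\le\rho_\varepsilon\le\rho+\varepsilon$ everywhere, together with $\rho_\varepsilon\ge(1-\delta)\varepsilon/2$ on $\Pi_2$, so all constants depend only on $p,\alpha,\delta$. Alternatively, Lemma on (\ref{Fr}) can be used to control the part of the left-hand side over $\Pi_2$ near $\Gamma$, where $\rho_\varepsilon\sim\varepsilon$, if the maximal-function route gives $\varepsilon$-dependent constants there.
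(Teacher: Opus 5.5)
\textbf{There is a genuine gap in your $\alpha>0$ step.} Your reduction to smooth functions and your $\alpha=0$ step match the paper's. The $\alpha>0$ step, however, cannot work: the obstacle you flag at the end is fatal, not technical.

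The pointwise bound you aim for, $\rho_\varepsilon^{\alpha}(x)\bigl(M_{\rho_\varepsilon(x)}|\nabla u_\varepsilon|\bigr)^p\le C\bigl(M(\rho_\varepsilon^{\alpha s/p}|\nabla u_\varepsilon|^s\chi)\bigr)^{p/s}(x)$, would give after integration and Theorem \ref{HL}
\[
\int_\Omega\rho_\varepsilon^{\alpha}(x)\Bigl(M_{\rho_\varepsilon(x)}\bigl(|\nabla u_\varepsilon|\chi_{B(\overline{x},\rho_\varepsilon(x))}\bigr)(x)\Bigr)^p\,dx\le C\int_\Omega\rho_\varepsilon^{\alpha}|\nabla u_\varepsilon|^p\,dx .
\]
This is false for every fixed $\varepsilon$ when $\alpha>0$. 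A counterexample:
\begin{itemize}
\item Choose $z_0\in\Pi_1$ at distance $2\tau$ from both $\Gamma$ and some line $l_i$, with $\tau\ll\varepsilon$, and let $u_\varepsilon$ be a bump of height $\tau$ supported in $B(z_0,\tau)$.
\item Then $\int_\Omega\rho_\varepsilon^\alpha|\nabla u_\varepsilon|^p\,dx\sim\tau^{2+\alpha}$, because $\rho_\varepsilon(z)=z_1\sim\tau$ on the support.
\item For $x\in\Pi_2$ with $|x-z_0|<4\tau$ (a set of area $\sim\tau^2$), one has $\rho_\varepsilon(x)\ge(1-\delta)\varepsilon/2$. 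The ball $B(x,5\tau)$ is admissible and contains $B(z_0,\tau)$, so the maximal function is $\gtrsim1$ there.
\item Hence the left-hand side is $\gtrsim\varepsilon^\alpha\tau^2$, and the ratio $(\varepsilon/\tau)^\alpha$ is unbounded.
\end{itemize}
For this $u_\varepsilon$, (\ref{Hardy}) itself holds comfortably. The loss comes from freezing the weight at $x$ and passing through $M_{\rho_\varepsilon}$.

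The mechanism is the jump of $\rho_\varepsilon$ across $l_i$ at the endpoints $Q$ of $\Gamma_i^\varepsilon$. On the $\Pi_1$ side $\rho_\varepsilon=z_1\to0$, while on the $\Pi_2$ side $\rho_\varepsilon\ge(1-\delta)\varepsilon/2$. So on balls $B$ near $Q$ of radius $r\ll\varepsilon$, the averaged lower comparability $\frac{1}{|B|}\int_B\rho_\varepsilon^{-\gamma}\lesssim\rho_\varepsilon(x)^{-\gamma}$ that your H\"older step needs fails by a factor $(\varepsilon/r)^\gamma$. Likewise $\rho_\varepsilon^\alpha$ violates the Muckenhoupt condition there by $(\varepsilon/r)^\alpha$. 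Integrability of $x_1^{-\gamma}$ for $\gamma<1$ does not address this.

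The facts you propose to use ($\rho\le\rho_\varepsilon\le\rho+\varepsilon$, and $\rho_\varepsilon\ge(1-\delta)\varepsilon/2$ on $\Pi_2$) give only the upper comparison, which you yourself note goes the wrong way. The fallback via (\ref{Fr}) also fails: $K(a,\varepsilon,\delta)$ does not tend to zero with $\varepsilon$. Using it on $\int_{\Pi_2\cap\{x_1\lesssim\varepsilon\}}\rho_\varepsilon^{-p+\alpha}|u_\varepsilon|^p$ therefore costs a factor $\varepsilon^{-p+\alpha}$ and leaves an unweighted gradient on the right.

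\textbf{How the paper proceeds.} It never uses a weighted maximal estimate. It applies the $\alpha=0$ inequality to $v_\varepsilon=|u_\varepsilon|\rho_\varepsilon^{\alpha/p}$ and expands $|\nabla v_\varepsilon|^p$ as in (\ref{put}). With $\sigma=\alpha/p$, it then absorbs $C_2\sigma^p\int_\Omega\rho_\varepsilon^{(\sigma-1)p}|u_\varepsilon|^p$ into the left side when $C_2\sigma^p<1$. That absorption condition is what $\alpha_0$ means there: a small threshold set by the $\alpha=0$ constant, not $p-1$. That argument must face the same jump too, since $v_\varepsilon$ is discontinuous across the $l_i$ while (\ref{put}) only accounts for $\partial\rho_\varepsilon/\partial x_1$.

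\textbf{A possible repair of your route.} Work with the paper's $r_2(x)=\mathrm{dist}(x,\Gamma_\varepsilon)$ instead of $\rho_\varepsilon$.
\begin{itemize}
\item Since $\rho\le r_2\le\rho_\varepsilon$ and $-p+\alpha<0\le\alpha$, the inequality with weight $r_2$ implies (\ref{Hardy}).
\item Powers of the distance to a subset of a line are $A_{p/s}$ weights, uniformly in $\varepsilon$, for $0\le\alpha<p/s-1$.
\item However, you would then need a pointwise Hardy inequality with $r_2$ in place of $\rho_\varepsilon$, for instance from uniform fatness of $\Gamma_\varepsilon$. That is stronger than Theorem \ref{le1} and is not provided by the paper.
\end{itemize}
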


\begin{proof}%[Proof of Theorem {\rm\ref{main}}.]
Fix $u_\varepsilon\in C^{\infty}(\Omega, \Gamma_\varepsilon)$
extended in $\mathbb{R}^n\setminus{\overline{\Omega}}.$ According to
(\ref{pwH}) the inequality
$$\frac{|u_\varepsilon(x)|}{\rho_\varepsilon(x)}\leq 8 M_{\rho_\varepsilon(x)}\left(|\nabla
u_\varepsilon|\chi_{B(\overline{x},
\rho_\varepsilon(x))}\right)(x)$$ holds for all $x\in \Omega.$ Then
we have that
$$\int\limits_{\Omega}\frac{|u_\varepsilon(x)|^p}{\rho^p_\varepsilon(x)}\,dx\leq 8^p\int\limits_{\Omega}M^p_{\rho_\varepsilon(x)}
\left(|\nabla u_\varepsilon|\chi_{B\left(\overline{x},
\rho_\varepsilon(x)\right)}(x)\right)\,dx.$$ The statement in
Theorem \ref{le1} implies that
$$\int\limits_{\Omega}M^p_{\rho_\varepsilon(x)}\left(|\nabla u_\varepsilon|\chi_{B\left(\overline{x},
\rho_\varepsilon(x)\right)}\right)(x)\,dx\leq
C_1\int\limits_{\Omega}|\nabla u_\varepsilon(x)|^p\,dx.$$ Thus, it
yields that
$$\int\limits_{\Omega}\left(\frac{|u_\varepsilon(x)|}{\rho_\varepsilon(x)}\right)^p\,dx
\leq C\int\limits_{\Omega}|\nabla u_\varepsilon(x)|^p\,dx,$$ where
$C=8^pC_1.$

Hence, the inequality (\ref{Hardy}) holds with $\alpha=0.$ The next
step is to prove (\ref{Hardy}) for $\alpha>0.$ Choose $\sigma>0$ and
put $v_\varepsilon=|u_\varepsilon|\rho_\varepsilon^{\sigma}.$ Taking into account an evident inequalities $(a+b)^2\leq 2(a^2+b^2),$
$(a+b)^p\leq 2^p(p+1)(a^p+b^p),\ \text{ for } p>2$ it is
not difficult to derive that
\begin{equation}\label{put}
\begin{aligned}
|\nabla v_{\varepsilon}|^p=(|\nabla v_{\varepsilon}|^{2})^\frac{p}{2}\leq\left(2\left(\frac{\partial
u_\varepsilon}{\partial x_1}\right)^2\rho_\varepsilon^{2\sigma}+2\sigma^2\rho_\varepsilon^{2\sigma-2}\left(\frac{\partial
\rho_{\varepsilon}}{\partial x_1}|u_\varepsilon|\right)^2+\rho_\varepsilon^{2\sigma}\left(\frac{\partial
u_\varepsilon}{\partial x_2}\right)^2\right)^{\frac{p}{2}}\\
\leq2^{p}\left(\frac{p}{2}+1\right)\left(|\nabla u_\varepsilon|^p\rho_\varepsilon
^{\sigma p}+\sigma^p\rho_\varepsilon^{(\sigma-1)p}|u_\varepsilon|^p\right).
\end{aligned}
\end{equation}
By now applying (\ref{Hardy}) with $\alpha=0$ to $v_{\varepsilon},$ we obtain that
\begin{equation*}
\begin{aligned}
\int\limits_{\Omega}\rho_\varepsilon^{-p+\sigma p}|u_\varepsilon|^p\,dx
\leq C_2\left(\int\limits_{\Omega}|\nabla
u_\varepsilon|^p\rho_\varepsilon^{\sigma p}\,dx+\sigma^p\int\limits_{\Omega}\rho_\varepsilon^{p(\sigma-1)}|u_\varepsilon|^p\,dx
\right).
\end{aligned}
\end{equation*}
Here $C_2=C 2^p\left(\frac{p}{2}+1\right).$
 If $1-C_2\sigma^p>0,$ then we have
$$\int\limits_{\Omega}\rho_\varepsilon^{-p+p\sigma}|u_\varepsilon|^p\,dx\leq
\frac{C_2}{1-C_2\sigma^p}\int\limits_{\Omega}\rho_\varepsilon^{p\sigma}|\nabla
u_\varepsilon|^p\,dx.$$
 Substituting $\sigma$ by $\frac{\alpha}{p},$ we prove the inequality (\ref{Hardy}) for $\alpha>0.$  Finally, by approximating the functions from $W^{1}_{p}(\Omega,
\Gamma_\varepsilon)$ by smooth functions belonging to
$C^{\infty}(\Omega, \Gamma_\varepsilon),$ we can complete the proof.
\end{proof}
Our final main result is presented in the next theorem.
\begin{theorem}\label{main2}
Let $\rho(x)=dist(x,\Gamma)$ and $p>1,$ $0\leq\alpha<\alpha_0.$ Then there exists a constant $C>0$ such that
\begin{equation}\label{Hardyro}
\int\limits_{\Omega}\rho^{-p+\alpha}(x)|u_\varepsilon|^p(x)\,dx\leq
C\int\limits_{\Omega}\rho^{\alpha}(x)|\nabla
u_\varepsilon(x)|^p\,dx
\end{equation}
holds for all functions $u_\varepsilon\in
W^1_p(\Omega, \Gamma_\varepsilon).$
%where the constant $$C(a,
%\varepsilon, \delta,
%\theta)=4+2\frac{1-\delta}{\theta^{2}}\left(\frac{a^2}{\delta}+\varepsilon^2(1-\delta)\right).$$
\end{theorem}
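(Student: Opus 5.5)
The plan is to reduce (\ref{Hardyro}) to Theorem \ref{main} by comparing $\rho$ with $\rho_\varepsilon$. By density it suffices to take $u_\varepsilon\in C^{\infty}(\Omega,\Gamma_\varepsilon)$. Split $\Omega=\Pi_1\cup\Pi_2$. The main difficulty will be that $\rho/\rho_\varepsilon\to0$ near $\gamma_\varepsilon$, where $u_\varepsilon$ need not vanish. This is the point I do not expect to be able to close, and it may be fatal to the statement as written.

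\textbf{Right-hand side.} By (\ref{max}) we have $\rho\le\rho_\varepsilon\le\rho+(1-\delta)\frac{\varepsilon}{2}$. For $\alpha\ge0$ this gives
$$\rho_\varepsilon^\alpha\le 2^{\alpha}\Bigl(\rho^\alpha+\bigl((1-\delta)\tfrac{\varepsilon}{2}\bigr)^\alpha\Bigr).$$
So $\int_\Omega\rho_\varepsilon^\alpha|\nabla u_\varepsilon|^p\,dx$ is bounded by a multiple of $\int_\Omega\rho^\alpha|\nabla u_\varepsilon|^p\,dx$ plus an extra term $\varepsilon^\alpha\int_\Omega|\nabla u_\varepsilon|^p\,dx$. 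This extra term is not controlled by the weighted norm $\int_\Omega\rho^\alpha|\nabla u_\varepsilon|^p\,dx$ without a further argument, so already here the reduction is not clean, and I have no way to remove it.

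\textbf{$\Pi_1$.} On $\Pi_1$ we have $\rho=\rho_\varepsilon$. Granting the previous step, the $\Pi_1$ part of the left-hand side follows directly from (\ref{Hardy}).

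\textbf{$\Pi_2$ near $\Gamma$.} Here $\rho^{-p+\alpha}\ge\rho_\varepsilon^{-p+\alpha}$ for $\alpha<p$, so Theorem \ref{main} gives no control. The approach I would try is a vertical transfer. For $x=(x_1,x_2)\in\Pi_2^i$, pick $y_2$ with $(x_1,y_2)\in\Pi_1^i$ and $|x_2-y_2|\le\varepsilon$, and write
$$u_\varepsilon(x)=u_\varepsilon(x_1,y_2)+\int_{y_2}^{x_2}\partial_{2}u_\varepsilon(x_1,t)\,dt.$$
Raising this to the power $p$, multiplying by $x_1^{-p+\alpha}$ and integrating, exactly as in the proof of the Friedrichs lemma (\ref{Fr}), splits the contribution into two pieces. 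The first piece is bounded by the $\Pi_1$ estimate above. The second piece is $\varepsilon^{p}\int x_1^{-p+\alpha}|\nabla u_\varepsilon|^p\,dx$.

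That second piece must be compared with $\int x_1^{\alpha}|\nabla u_\varepsilon|^p\,dx$, and this fails as $x_1\to0$: the ratio of weights is $\varepsilon^p/x_1^{p}$, which is unbounded near $\Gamma$. I would therefore use the vertical transfer only on $\{x_1>\varepsilon\}$. There the ratio $\varepsilon^p/x_1^{p}$ is at most $1$, so the second piece is controlled, and I would expect a constant independent of $\varepsilon$.

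\textbf{Thin layer $\{x_1<\varepsilon\}\cap\Pi_2$.} In this layer the trace of $u_\varepsilon$ on $\gamma_\varepsilon$ is generally nonzero. Then $\int_0^{\varepsilon}x_1^{-p+\alpha}\,dx_1$ is finite only when $\alpha>p-1$. For $\alpha\le p-1$ (in particular $\alpha=0$), taking $u_\varepsilon\ne0$ on $\gamma_\varepsilon$ makes the left-hand side of (\ref{Hardyro}) infinite, so the statement cannot hold as written there. My plan would thus establish (\ref{Hardyro}) only in the range $\max(0,p-1)<\alpha<\alpha_0$, with $C$ allowed to depend on $\varepsilon$. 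In that range I would bound the trace term by the one-dimensional inequality $|u_\varepsilon(x_1,x_2)|\le|u_\varepsilon(\varepsilon,x_2)|+\int_0^\varepsilon|\partial_1u_\varepsilon|\,dt$, then apply H\"older with weight $t^{\alpha}$, which is admissible exactly because $\alpha>p-1$.
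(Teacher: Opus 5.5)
\textbf{Your diagnosis.} Your central conclusion is right, and it is the decisive point: (\ref{Hardyro}) is false as stated. Since $\alpha=0$ is always admissible, your example already refutes the theorem. Take $u_\varepsilon(x)=\varphi(x_2)$ with $\varphi\not\equiv0$ smooth and compactly supported in the interior of one $\gamma^\varepsilon_i$. The right-hand side is finite, while the left-hand side equals $\int_0^a x_1^{-p+\alpha}\,dx_1\int|\varphi|^p\,dx_2=\infty$ for every $\alpha\le p-1$.

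\textbf{Where the paper's proof fails.} The paper makes exactly the reduction to Theorem \ref{main} that you begin with, and it breaks at the two places you flag.
On the right, it must bound $\rho_\varepsilon^\alpha$ by a multiple of $\rho^\alpha$. It does so via the ``evident inequality'' $(A+B)^\alpha\le2^\alpha(AB)^{\alpha/2}$, which is false for $\alpha>0$ (take $B=0$; AM--GM gives the reverse). So the term $\varepsilon^\alpha\int_\Omega|\nabla u_\varepsilon|^p\,dx$ that you could not absorb is simply dropped.
On the left, it passes from $\bigl(\rho+(1-\delta)\frac{\varepsilon}{2}\bigr)^{-(p-\alpha)}$ to $\rho^{-(p-\alpha)}$ at the cost of the factor $\bigl(1+(1-\delta)\frac{\varepsilon}{2}\cot\gamma_i\bigr)^{p-\alpha}$. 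But $\gamma_i$ depends on $x$, and $\cot\gamma_i\to\infty$ as $x$ approaches an interior point of $\gamma_\varepsilon$. So this ``constant'' is unbounded, which is precisely your thin-layer obstruction.

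\textbf{Where your proposal goes wrong.} The error is in your salvage for $\max(0,p-1)<\alpha<\alpha_0$. The H\"older step has its condition reversed:
\[
\int_0^\varepsilon|\partial_1u_\varepsilon|\,dt\le\Bigl(\int_0^\varepsilon t^{\alpha}|\partial_1u_\varepsilon|^p\,dt\Bigr)^{1/p}\Bigl(\int_0^\varepsilon t^{-\alpha/(p-1)}\,dt\Bigr)^{(p-1)/p},
\]
and the last integral is finite only when $\alpha<p-1$. That is the opposite of what integrability of $x_1^{-p+\alpha}$ requires.

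This cannot be repaired. For $\alpha\ge p-1$ the inequality fails even for functions vanishing on all of $\Gamma$, with any constant, $\varepsilon$-dependent or not. Take $u_\eta(x)=\min\{1,x_1/\eta\}$ with $0<\eta<a$; it lies in $W^1_p(\Omega,\Gamma_\varepsilon)$. Then
\[
\int_\Omega x_1^{\alpha}|\nabla u_\eta|^p\,dx=\frac{b\,\eta^{\alpha+1-p}}{\alpha+1},\qquad\int_\Omega x_1^{\alpha-p}|u_\eta|^p\,dx\ge b\int_\eta^a x_1^{\alpha-p}\,dx_1 .
\]
For $\alpha>p-1$, the first quantity tends to $0$ as $\eta\to0$, while the second tends to $b\,a^{\alpha-p+1}/(\alpha-p+1)>0$. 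For $\alpha=p-1$, the first equals $b/p$, while the second is at least $b\ln(a/\eta)\to\infty$. This is the classical reason for the restriction $\alpha<p-1$ in (\ref{dist}).

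Together with your own example, this shows (\ref{Hardyro}) holds for no $\alpha\ge0$.

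\textbf{A true statement to aim at.} Restrict the left-hand integral to $\Pi_1$, where $u_\varepsilon(0,x_2)=0$. For $0\le\alpha<p-1$, the one-dimensional weighted Hardy inequality along each horizontal segment of $\Pi_1$ gives (\ref{Hardyro}) over $\Pi_1$ with constant $\bigl(p/(p-1-\alpha)\bigr)^p$, independent of $\varepsilon$.
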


\begin{proof}
Due to the geometrical construction of the domain, $$\rho(x)\leq\rho_{\varepsilon}(x)<\rho(x)+(1-\delta)\frac{\varepsilon}{2}.$$
Having in mind the estimate (\ref{Hardy}), and an evident inequality for nonnagative $A~\geq~0, B~\geq~0,$ $(A+B)^{\alpha}\leq 2^{\alpha}(A\cdot B)^{\frac{\alpha}{2}}$ one concludes the following:

\begin{equation}\label{Hardyromain}
\int\limits_{\Omega}\frac{|u_\varepsilon|^p(x)}{(\rho(x)+(1-\delta)\frac{\varepsilon}{2})^{p-\alpha}}\,dx\leq
C_1\int\limits_{\Omega}\rho^{\alpha}(x)|\nabla
u_\varepsilon(x)|^2\,dx
\end{equation}
 where $$C_1=2^{\frac{\alpha}{2}}(1-\delta)^{\frac{\alpha}{2}}{\varepsilon^{\frac{\alpha}{2}}}C.$$
Let $\gamma_i$ be the angle between the segment $\Gamma_i$ and the line $XA_i$ where $XA_i=\text{dist}(x, \Gamma^\varepsilon_i).$
Then $$\rho(x)=\begin{cases}
\text{dist} {(x,\Gamma^\varepsilon)}, \text{ if } x=(x_1,x_{2})\in l_i;\\
(1-\delta)\frac{\varepsilon}{2}tg\gamma_i<\infty, \text{ else }.
\end{cases}$$
Therefore, the estimate (\ref{Hardyromain}) can be rewritten as
\begin{equation}\label{Hardyromain2}
\int\limits_{\Omega}\frac{|u_\varepsilon|^p(x)}{\rho(x)^{p-\alpha}}\,dx\leq
C_2\int\limits_{\Omega}\rho^{\alpha}(x)|\nabla
u_\varepsilon(x)|^2\,dx,
\end{equation}
where $$C_2=C_1\left(1+(1-\delta)\frac{\varepsilon}{2}ctg\gamma_i\right)^{p-\alpha}.$$

\end{proof}

\section{Concluding remarks.}\label{s3}
\begin{remark}\label{rem1}{\rm
The condition $\delta=1$ in the definition (\ref{max}) corresponds
to the case $\Gamma_\varepsilon~=~{\Gamma}.$ In this case the
Hardy-type inequality {\rm(\ref{Hardy})} becomes the inequality
{\rm(\ref{dist})} and the constant in {\rm(\ref{Hardyro})} equals to
the constant in the classical Hardy inequality when the function is
vanishing on the whole boundary.}
\end{remark}

\begin{remark}\label{rem2}{\rm
If $\alpha=0,$ then {\rm(\ref{Hardy})} takes the  form
\begin{equation}\label{Hardy1}
\int\limits_{\Omega}\left(\frac{|u_\varepsilon(x)|}{\rho_\varepsilon(x)}\right)^p\,dx\leq
C\int\limits_{\Omega}|\nabla u_\varepsilon(x)|^p\,dx.
\end{equation}
This inequality is an important tool for proving the embedding theorems.}
%while {\rm(\ref{Hardyro})} becomes
%$$\int\limits_{\Omega^\theta}\left(\frac{u_\varepsilon(x)}{\rho(x)}\right)^2\,dx\leq
%C(a, \varepsilon, \delta, \theta)\int\limits_{\Omega}|\nabla
%u_\varepsilon(x)|^2\,dx.$$
\end{remark}

\begin{remark}{\rm
The proved result is valid for the case when $\Omega$ is an arbitrarily Lipschitz domain and the function vanishes on the small pieces of its boundary.}
\end{remark}
%We conjecture that these Hardy-type inequalities holds also when
%$p=2$ is replaced by any $p>1$ but then another type of proof must
%be found.
%
%\begin{remark}\label{rem3}
%In this paper we have succeeded to prove a weighted Hardy-type
%inequality in a fixed domain for functions vanishing on a part of
%the boundary. We can see several open questions equipped with this
%result. For instance, one interesting problem is to try to find a
%weighted Hardy-type inequality for perforated domains in the case
%when the size of perforation depends on the small parameter.
%\end{remark}

 % Acknowledgments
\section*{\large Acknowledgments} The author thanks Professors Gregory Chechkin and Lars-Erik Persson for introducing me in the area of functional inequalities.

\def\bibname{\vspace*{-30mm}{

\vskip 1 cm \footnotesize
\begin{flushleft}
Yulia Koroleva \\ % name of the authors from the same institutio
 Moscow Tikhonov Institute \\
 of Electronics and Mathematics,\\ % name of the department, where author works
HSE National Research University, \\ % name of the university, where author works
Moscow Tallinskaya str. 34, Russia\\ % buseness address % buseness address 
E-mails: yo.koroleva@hse.ru
\end{flushleft}

%\vskip0.5cm
%\begin{flushright}
%Received: ??.??.20??
%\end{flushright}

\end{document}